\newtheorem{theorem}{Theorem}
\newtheorem{proposition}[theorem]{Proposition}
\newtheorem{lemma}[theorem]{Lemma}
\newtheorem{conjecture}{Conjecture}
\theoremstyle{definition}
\newtheorem{definition}{Definition}
\theoremstyle{remark}
\newtheorem{Example}{Example}
\newtheorem{Remark}{Remark}
\newcommand{\name}[1]{\operatorname{\mathrm #1}}
\newcommand \bydef {\mathrel{:=}}
\def \lmod#1\rmod {\vphantom{#1}\left|\smash{#1}\right|}
\newcommand{\cD}{\mathcal{D}}
\newcommand{\cP}{\mathcal{P}}
\newcommand{\cN}{\mathcal{N}}
\newcommand{\Complex}{\mathbb{C}}
\newcommand {\al}{\alpha}
\newcommand {\ga}{\gamma}
\newcommand{\DT}[1]{#1 \dots #1}
\newcommand{\ZZ}{\mathbb{Z}}
\newcommand{\C}{\mathcal{C}}
\newcommand{\CP}{\mathbb{CP}}
\newcommand{\OO}{\mathcal{O}}
\newcommand{\OOO}{\mathfrak{O}}
\newcommand{\Res}{\name{Res}}
\def \W {\mathcal W}
\def \h {\mathfrak H}
\def \tW {\widetilde {\mathcal W}}
\def \RemT {\mathcal{RT}}
\def \LocT {\mathcal{LT}}
\def \Node {\mathcal{ND}}
\def \Branch {\operatorname{\mathrm Br}}
\begin{document}
\numberwithin{equation}{section}

\title[On Hurwitz--Severi numbers]{On Hurwitz--Severi numbers}

\author[Yu.\,Burman]{Yurii Burman}

\address{National Research University Higher School of Economics
  (NRU-HSE), Russia, and Independent University of Moscow,
  B.~Vlassievskii per., 11, Moscow, 119002, Russia}

\email{burman@mccme.ru}

\author[B.\,Shapiro]{Boris Shapiro}
\address{Department of Mathematics, Stockholm University, S-10691, 
Stockholm, Sweden} 
\email{shapiro@math.su.se}

\keywords{Severi varieties, Hurwitz numbers}
\subjclass[2010]{Primary 14H50, Secondary 14H30}

 \begin{abstract} 
For a point $p\in \CP^2$ and a triple $(g,d,\ell)$ of non-negative integers 
we define a {\em Hurwitz--Severi number} $\h_{g,d,\ell}$ as the number of 
generic irreducible plane curves of genus $g$ and degree $d+\ell$ having an 
$\ell$-fold node at $p$ and at most ordinary nodes as singularities at the 
other points, such that the projection of the curve from $p$ has a 
prescribed set of local and remote tangents and lines passing through 
nodes. In the cases $d+\ell\ge g+2$ and $d+2\ell \ge g+2 > d+\ell$ we 
express the Hurwitz--Severi numbers via appropriate ordinary Hurwitz 
numbers. The remaining case $d+2\ell<g+2$ is still widely open.
 \end{abstract}

\maketitle

\section{Introduction and main results}

In what follows we will always work over the field $\Complex$ of complex 
numbers, and by a genus $g$ of a (singular) curve $C$ we mean its geometric 
genus, i.e.\ the genus of its normalisation. 

Fix a point $p\in \CP^2$ and denote by $\W_{g,d,\ell}$ the set consisting 
of all reduced irreducible plane curves of degree $d+\ell$, genus $g$, 
having an $\ell$-fold node at the point $p$ (i.e.\ $\ell$ smooth local 
branches intersecting transversally at $p$; $\ell = 0$ means that $p$ does 
not belong to the curve), all the singularities outside $p$ (if any) being 
ordinary nodes. The set $\W_{g,d,\ell}$ is nonempty if and only if
 \begin{equation}\label{Eq:Nonempty}
g\le \binom{d+\ell-1}{2}-\binom {\ell}{2},
 \end{equation}
see \cite{Ra1}. 

$\W_{g,d,\ell}$ is usually referred to as the (open, generalized) {\em 
Severi variety}, the classical case corresponding to $\ell=0$. The study of 
this variety was initiated by F.~Severi \cite{Se} back in the 1920s. In a 
number of celebrated papers (see e.g., \cite{Ha1}, \cite{HaMo}, \cite{Ra1}, 
\cite{Ra2}) $\W_{g,d,\ell}$ was proved to be irreducible of dimension 
$3d+2\ell+g-1$. Another well-studied characteristics of the Severi 
varieties is their degree, see \cite{CaHa}.

A Hurwitz--Severi number $\h_{g,d,\ell}$, which we define below, seems to 
be an equally natural characteristics of $\W_{g,d,\ell}$ as its degree, but 
it is apparently more difficult to calculate for general triples 
$(g,d,\ell)$.

The set $\W_{g,d,\ell}$ is acted upon by the group $G \subset
\name{PGL}(3,\Complex)$ of projective transformations of $\CP^2$
preserving $p$ and each line passing through $p$. Obviously, $G$ is a
$3$-dimensional Lie group that acts locally freely on
$\W_{g,d,\ell}$. (In fact, unions of lines passing through $p$ are the
only curves having positive-dimensional stabilizers under this
action.) Denote the orbit space of the action by $\tW_{g,d,\ell}
\bydef \W_{g,d,\ell}/G$; it is smooth almost everywhere and its
dimension equals $3d+2\ell+g-4$.

Let us denote by $\C$ a normalisation of a given plane curve $C$ and by 
$\kappa: \C \to C$, the normalisation map. For a curve $C \in 
\W_{g,d,\ell}$, one defines the associated  meromorphic function of degree 
$d$
 \begin{equation*}
\al_C \bydef \pi_p \circ \kappa: \C \to  p^\perp \simeq \CP^1
 \end{equation*}
obtained by composing the normalisation map with the standard projection 
$\pi_p: \CP^2\setminus p\to p^\perp$ from the point $p$ to the pencil 
$p^\perp \simeq \CP^1$ of lines passing through $p$.

For a generic $C \in \W_{g,d,\ell}$, there are $\ell$ distinct lines 
tangent to $C$ at $p$ ({\em local tangents}), and $2d+2g-2$ distinct  
lines passing through $p$ and tangent to $C$ elsewhere ({\em remote 
tangents}). Additionally, the curve $C$ has 
 \begin{equation*}
\#_{\text{nodes}} = \binom{d+\ell-1}{2}-\binom{\ell}{2}-g = \binom{d-1}{2} 
+ \ell(d-1) - g \ge 0
 \end{equation*}
ordinary nodes (outside $p$), see e.g.\ \cite{OnSh}. A line passing 
through $p$ and a remote node will be called {\em node-detecting}.
 
For any set $X,$ denote by $X^{(m)}$ its $m$-th symmetric power,
i.e.\ the quotient of the Cartesian product $X\times X\times \dots
\times X$ of $m$ copies of $X$ by the natural action of the symmetric
group $S_m$ permuting the copies. In the case when $X$ is a smooth
complex curve, $X^{(m)}$ is naturally interpreted as the set of
effective divisors of degree $m$ on $X$. Below we will denote elements
of $X^{(m)}$ as divisors $z_1 \DT+ z_m$, where $z_1 \DT, z_m \in X$
are not necessarily distinct. The $m$-th symmetric power
$(\CP^1)^{(m)}$ is identified with $\CP^m$ by means of the standard
map sending the divisor $[z_1{:}w_1] \DT+ [z_m{:}w_m]$ to $[a_0
  \DT{{:}} a_m]$, where $\sum_{k=0}^m a_k t^k s^{m-k} \bydef
\prod_{k=1}^m (tz_k-sw_k)$.

We now define three natural maps
 \begin{align*}
&\RemT: \W_{g,d,\ell} \to (p^\perp)^{(2d+2g-2)},\\
&\LocT:\W_{g,d,\ell} \to (p^\perp)^{(\ell)},\\
&\Node: \W_{g,d,\ell} \to (p^\perp)^{(\#\text{nodes})},
 \end{align*}
where $\RemT$ sends $C\in \W_{g,d,\ell}$ to the divisor of its
remote tangents, $\LocT$ sends $C\in \W_{g,d,\ell}$ to the divisor of
its local tangents, and $\Node$ sends $C\in \W_{g,d,\ell}$ to the
divisor of its node-detecting lines. Observe also that, for any $C\in
W_{g,d,\ell}$, $\RemT(C)$ coincides with the divisor of the critical
values of the meromorphic function $\al_\C$.

$\RemT$, $\LocT$ and $\Node$ are obviously preserved by the action of
the group $G$ and, therefore, can be considered as maps defined on
$\tW_{g,d,\ell}$.

The triple of maps 
 \begin{equation*}
\Branch_{g,d,\ell} \bydef (\RemT,\LocT,\Node): \tW_{g,d,\ell} 
\to \cP_{g,d,\ell}
 \end{equation*}
where
 \begin{equation*}
\cP_{g,d,\ell} \bydef (p^\perp)^{(2d+2g-2)} \times (p^\perp)^{(\ell)} 
\times (p^\perp)^{(\#\text{nodes})},
 \end{equation*}
is called the {\em branching morphism}. An easy calculation shows that
for all triples $(g,d,\ell),$ one has
 \begin{align*}
\dim \tW_{g,d,\ell} & =3d+2\ell+g-4 =
2d+2g-2+\ell+\#_{\mathrm {nodes}} - \frac{(d-2)(d+2\ell-3)}{2}\\
& \le 2d+2g-2+\ell+\#_{{\mathrm {nodes}}}=\dim \cP_{g,d,\ell};
 \end{align*}
the equality takes place for the series of triples $(g,2,\ell)$
with $g\le \ell$ (cf.\ \eqref{Eq:Nonempty}) and in two additional
cases: $(0,3,0)$ and $(1,3,0)$.

 \begin{definition}\strut
 \begin{itemize}
\item A triple $(g,d,\ell)$ is called {\em bendable} if $\dim
\tW_{g,d,\ell} \ge 2d+2g-2+\ell$. This is equivalent to
$d+\ell \ge g+2$ and means that $\dim \tW_{g,d,\ell}$ is
larger than or equal to the sum of the number of branch points of
$\al_C$ plus the number of local tangents. A triple $(g,d,\ell)$ is
called {\em strongly bendable} if $\dim \tW_{g,d,\ell} =
2d+2g-2+\ell+\#_{\text{nodes}}$, i.e. $\Branch_{g,d,\ell}$ is the
map between spaces of equal dimension. (All these cases are listed
above.)

\item A triple $(g,d,\ell)$ is called {\em semi-bendable} if $\dim 
\tW_{g,d,\ell} < 2d+2g-2+\ell$, but $\dim \tW_{g,d,\ell} \ge 2d+2g-2$. This 
is equivalent to $d+\ell < g+2 \le d+2\ell$ and means that $\dim 
\tW_{g,d,\ell}$ is larger than or equal to the number of branch points of 
$\al_C$.

\item Otherwise, a triple $(g,d,\ell)$ is called {\em unbendable}. It
means that $\dim \tW_{g,d,\ell} < 2d+2g-2$ or, 
equivalently, that $d + 2\ell < g+2$.
 \end{itemize}
 \end{definition}

 \begin{conjecture}
For any triple $(g,d,\ell)$ which is not strongly bendable, 
$\Branch_{g,d,\ell}$ is a birational map of $\tW_{g,d,\ell}$ on its image.
 \end{conjecture}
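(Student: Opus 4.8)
The plan is to prove birationality by exhibiting a rational inverse, i.e.\ by reconstructing a generic curve $C$, up to the $G$-action, from the triple $(\RemT(C),\LocT(C),\Node(C))$. Choose affine coordinates in which $p=[0{:}0{:}1]$ is the point at infinity of the vertical direction, so that projection from $p$ is the first coordinate. Then the normalisation map becomes $\kappa=(\al_C,F)\colon\C\to\mathbb{C}^2\subset\CP^2$, where $\al_C$ is the degree-$d$ projection and $F=s_2/s_0$ is the height function; the group $G$ acts by $F\mapsto cF+b\,\al_C+a$ with $c\neq 0$, so the object to be recovered is the pair $(\al_C,F)$ modulo this affine action. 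In these terms $\RemT(C)$ is the divisor of critical values of $\al_C$; $\LocT(C)=\al_C(D)$, where $D=q_1\DT+q_\ell$ is the part of the polar divisor of $F$ lying over finite values of $\al_C$ (the branches at $p$); and $\Node(C)$ is the divisor of those $\lambda\in p^\perp$ over which $F$ takes a repeated value on the fibre $\al_C^{-1}(\lambda)$.

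I would then reconstruct $(\al_C,F)$ in three steps. \emph{(Cover.)} By Riemann's existence theorem the critical values $\RemT(C)$ determine the branched cover $\al_C\colon\C\to p^\perp$ of degree $d$ up to finitely many monodromy types; equivalently, the induced map $\tW_{g,d,\ell}\to(p^\perp)^{(2d+2g-2)}$ factors through the Hurwitz space and is generically finite. \emph{(Branches at $p$.)} Given $\al_C$, the divisor $D$ is constrained by $\al_C(D)=\LocT(C)$, which confines each $q_i$ to a single finite fibre of $\al_C$ and hence determines $D$ up to finitely many choices. \emph{(Height.)} Once $\al_C$ and $D$ are fixed, $F$ ranges over the projective space $\mathbb{P}\bigl(H^0(\OO_\C(\al_C^{-1}(\infty)+D))/\langle s_0,s_1\rangle\bigr)$, of dimension $d+\ell-g-2$ in the bendable range, and the requirement $\Node(F)=\Node(C)$ imposes $\#_{\text{nodes}}$ conditions. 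The analytic heart of this step is to show that a generic admissible node divisor is realised by a unique height function modulo $G$, i.e.\ that the node map is birational onto its image on each such projective space of heights.

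The main obstacle is to promote this manifestly finite reconstruction to degree exactly one. Since $\tW_{g,d,\ell}$ is irreducible of the expected dimension (see \cite{Ha1},\cite{Ra1}), it suffices to compute the generic number of preimages of $\Branch_{g,d,\ell}$ at one convenient point, and what must be excluded is that two different monodromy types of $\al_C$, or two different selections of the $q_i$ inside their fibres, yield the same triple $(\RemT,\LocT,\Node)$. As the Hurwitz monodromy group is typically the full symmetric group, breaking this ambiguity is exactly where the data $(\LocT,\Node)$ must do its work; the hypothesis that $(g,d,\ell)$ is not strongly bendable---so that the image is a proper subvariety of $\cP_{g,d,\ell}$ and the node-detecting lines satisfy nontrivial relations---is what one expects to rigidify the reconstruction. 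I would attack it by a degeneration: collide the branch points of $\RemT(C)$ so that $\al_C$ approaches a boundary point of the Hurwitz space, an admissible cover at which the cover is rigid, and there write down $D$ and $F$ explicitly so that the reconstruction is visibly one-to-one; counting admissible covers in this limit and using that the number of preimages of a generically finite morphism is computed by any such degeneration would then give the generic count $1$. An alternative is a direct computation showing that the monodromy of the covering $\tW_{g,d,\ell}\to\operatorname{Im}\Branch_{g,d,\ell}$ is trivial, again by dragging branch points together.

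Finally, the non-bendable ranges must be treated separately. When $d+\ell<g+2$ a non-special line bundle of degree $d+\ell$ has fewer than three sections, so $\OO_\C(\al_C^{-1}(\infty)+D)\cong\kappa^*\OO(1)$ is forced to be special and lies in the Brill--Noether locus $W^2_{d+\ell}$; the clean fibration of the height functions over $(\RemT,\LocT)$ of the previous paragraphs must then be replaced by an analysis inside that stratum, and the same degeneration and monodromy tools carried out relative to it. I expect the semi-bendable case $d+\ell<g+2\le d+2\ell$ to remain tractable in this way, while the unbendable case $d+2\ell<g+2$---where the Brill--Noether number $\rho=3(d+\ell-2)-2g$ may be negative and even the dimension of the relevant loci is delicate---is where I expect the argument to break down, in agreement with the status recorded in the abstract.
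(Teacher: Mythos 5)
You were asked to prove what the paper itself states only as a Conjecture: the authors give no proof of it anywhere in the text (their Theorems~\ref{Th:Bend} and~\ref{Th:Semi} compute the number of preimages of certain \emph{positive-dimensional} subspaces of $\cP_{g,d,\ell}$, which is a different and weaker kind of statement than birationality of $\Branch_{g,d,\ell}$). So there is no paper proof to compare against, and your text has to stand on its own as a proof. It does not: it is a research plan in which every step that goes beyond routine dimension counting is announced rather than carried out.

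Concretely, your three reconstruction steps (cover, branches at $p$, height) establish only that $\Branch_{g,d,\ell}$ is generically finite onto its image, and indeed with a potentially huge generic degree: over a generic branch divisor there are $h_{g,1^d}/d!$ covers and $d^\ell$ admissible choices of the polar points $q_i$, and within each such choice a positive-dimensional family of heights $F$. The entire content of the conjecture is that the full node divisor $\Node(C)$ cuts this down to exactly one point, and that is precisely what you defer, twice: first to the unproven claim that ``a generic admissible node divisor is realised by a unique height function modulo $G$'' (the fiber-wise statement), and second to an unexecuted degeneration argument meant to exclude cross-fiber coincidences between different monodromy types or different choices of the $q_i$. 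Moreover, the degeneration tool as you describe it would not work as stated: $\tW_{g,d,\ell}$ is not proper, and for a generically finite dominant map of quasi-projective varieties the cardinality of a fiber over a \emph{special} point of the image (such as one where branch points have collided) neither bounds nor is bounded by the generic degree --- preimages can escape to the boundary or collapse. Making this rigorous would require compactifying $\tW_{g,d,\ell}$ so that $\Branch_{g,d,\ell}$ extends to the admissible-cover limit, which is itself a substantial construction you do not supply. Finally, your last paragraph concedes that the method breaks down for unbendable triples $d+2\ell<g+2$; but unbendable triples are not strongly bendable, so they are part of the statement, and even as a plan your proposal does not cover the conjecture in the generality in which it is asserted.
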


In view of this conjecture, to get combinatorially meaningful
quantities, we need to make the image space smaller so that its
dimension would be equal to that of $\tW_{g,d,\ell}$.

 \begin{definition}\strut
 
 \begin{enumerate}
\item Given a bendable triple $(g,d,\ell)$, let $\bar a = a_1 \DT+
  a_{2d+2g-2}$, $\bar b = b_1 \DT+ b_\ell$, and $\bar x = x_1 \DT+
  x_m$ be generic divisors on $p^\perp$ of degrees $2d+2g-2$, $\ell$,
  and $m \bydef \dim \tW_{g,d,\ell} - (2d+2g+\ell-2) =
  d+\ell-g-2$, respectively. Then the {\em Hurwitz--Severi number}
  $\h_{g,d,\ell}$ is defined as the number of $G$-orbits $\OOO \in
  \tW_{g,d,\ell}$ such that $\RemT(\OOO) =\bar a$,
  $\LocT(\OOO) = \bar b$, and $\Node(\OOO) \ge \bar x$ (i.e. all the
  lines $x_1 \DT, x_m$ are node-detecting for any $C \in \OOO$, but
  $C$ may have other node-detecting lines as well).

\item Given a semi-bendable triple $(g,d,\ell)$, let $\bar a = a_1
  \DT+ a_{2d+2g-2}$ and $\bar b = b_1 \DT+ b_m$ be generic divisors on 
  $p^\perp$ of degrees $2d+2g-2$ and $m \bydef \dim 
  \tW_{g,d,\ell} - (2d+2g-2) = d+2\ell-g-2$, respectively. Then 
  the {\em Hurwitz--Severi number} $\h_{g,d,\ell}$ is defined as the number 
  of $G$-orbits $\OOO \in \tW_{g,d,\ell}$ such that $\RemT(\OOO) 
  = \bar a$ and $\LocT(\OOO) \ge \bar b$ (i.e. all lines $b_1 \DT, b_m$ 
  are local tangents for any $C \in \OOO$, but $C$ may have other local 
  tangents as well).

\item Given an unbendable triple $(g,d,\ell)$, let $\bar a = a_1 \DT+
  a_m$ be a generic divisor on $p^\perp$ of degree $m \bydef \dim 
  \tW_{g,d,\ell}$. Then the {\em Hurwitz--Severi number} 
  $\h_{g,d,\ell}$ is defined as the number of orbits $\OOO \in 
  \tW_{g,d,\ell}$ such that $\RemT(\OOO) \ge \bar a$ (i.e. all 
  the lines $a_1 \DT, a_m$ are remote tangents for any $C \in \OOO$).
 \end{enumerate}
 \end{definition}
 
 \begin{Remark}
One can define a branching morphism and a Hurwitz-like number not only for 
Severi varieties, but for many other natural families of plane algebraic 
curves. Given a generic curve $\ga$ in such a family, take the divisor of 
all lines passing through a given point $p\in \CP^2$ which are not in 
general position with respect to $\ga$. Then one can either define a 
branching morphism by just mapping $\ga$ to this divisor or (as we did 
above) one can additionally split this divisor into several subdivisors 
keeping track of different singularities of the intersection of $\ga$ with 
a given line. A Hurwitz-like number will be the number of preimages of a 
generic subspace of appropriate dimension in the image space under the 
branching morphism.
 \end{Remark}

Our main results are formulas for the Hurwitz--Severi numbers in the 
bendable and semi-bendable cases. Consider the set of pairs $(\C,\al)$, 
where $\C$ is a connected smooth curve of genus $g$ and $\al: \C \to \CP^1$ 
is the meromorphic function of degree $d$ with a prescribed set of simple 
critical values. Such pairs are considered up to an isomorphism: $(\C,\al) 
\sim (\C',\al')$ if there exists a holomorphic homeomorphism $\phi: \C \to 
\C'$ such that $\al = \al' \circ \phi$. The number of the pairs is equal to 
$h_{g,1^d}/d!$, where $h_{g,1^d}$ is the ordinary Hurwitz number of genus 
$g$ and partition $1^d = (1 \DT, 1)$ of  $d$; see \cite{KaLa} for the 
precise definition and an algorithm of computation of the Hurwitz numbers.

 \begin{theorem}\label{Th:Bend}
Let $(g,d,\ell)$ be a bendable triple. Then the Hurwitz--Severi number 
$\h_{g,d,\ell}$ is  equal to ${\binom{d}{2}}^{d+\ell-g-2} 
d^{\ell} h_{g,1^d}/d!$.
 \end{theorem}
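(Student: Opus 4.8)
The plan is to fix a cover $\al\colon\C\to\CP^1$ with the prescribed branch locus and then count, for each such cover, the number of ways to reconstruct a curve $C\in\W_{g,d,\ell}$ whose projection from $p$ is $\al$ and which satisfies the prescribed local-tangent and node-detecting conditions; the three factors in the formula will arise from three essentially independent choices made during this reconstruction.

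First I would choose projective coordinates with $p=[0{:}0{:}1]$, so that $\pi_p[X{:}Y{:}Z]=[X{:}Y]$, and record the normalisation $\kappa\colon\C\to\CP^2$ as $\kappa=[\al_0{:}\al_1{:}\beta]$, where $\al=[\al_0{:}\al_1]$ is the degree-$d$ projection and $\beta$ is an extra section of $L\bydef\kappa^*\OO(1)$, $\deg L=d+\ell$. The subgroup $G$ acts only on the last coordinate, by $\beta\mapsto g\beta+e\al_0+f\al_1$, so fixing $\al$ reduces the orbit data to the class of $\beta$ in $\mathbb P\bigl(H^0(L)/\langle\al_0,\al_1\rangle\bigr)$. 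The $\ell$ preimages $q_1\DT+q_\ell$ of $p$ form the base divisor $D$ of $\al$ (the common zeros of $\al_0,\al_1$), whence $L=\al^*\OO(1)\otimes\OO(D)$; granting $h^1(L)=0$ one gets $h^0(L)=d+\ell-g+1$ and hence $\dim\mathbb P\bigl(H^0(L)/\langle\al_0,\al_1\rangle\bigr)=m=d+\ell-g-2$.

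Then I would translate the three conditions. Fixing $\RemT=\bar a$ prescribes the $2d+2g-2$ simple critical values of $\al$ (these are exactly its remote tangents), and by Hurwitz theory there are $h_{g,1^d}/d!$ isomorphism classes of such covers $(\C,\al)$. Since the local tangent at $q_i$ equals $\al(q_i)$, the condition $\LocT=\bar b$ forces each $q_i$ into the $d$-point fibre $\al^{-1}(b_i)$, producing $d^{\ell}$ admissible divisors $D$ (the $b_i$ being distinct, the chosen points are automatically distinct). Finally, writing $u=\beta/\al_1$, a node over $x_j$ is a coincidence $u(s_a)=u(s_b)$ between two points $s_a,s_b$ of the $d$-point fibre $\al^{-1}(x_j)$; the relation $\beta(s_a)\al_1(s_b)-\beta(s_b)\al_1(s_a)=0$ is linear in $\beta$ and, because $\al(s_a)=\al(s_b)=x_j$, is invariant modulo $\langle\al_0,\al_1\rangle$ and so descends to a hyperplane $H_{j,\{a,b\}}$ in $\mathbb P^m$. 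Thus $\Node\ge\bar x$ cuts out $\bigcap_{j=1}^m\bigl(\bigcup_{\{a,b\}}H_{j,\{a,b\}}\bigr)$, an intersection of $m$ unions of $\binom d2$ hyperplanes each. Distributing the union, each of the $\binom d2^{\,m}$ ways of selecting one colliding pair per $x_j$ yields $m$ hyperplanes in $\mathbb P^m$ whose transverse intersection is a single point, and multiplying the three counts gives $\h_{g,d,\ell}=\binom d2^{\,d+\ell-g-2}d^{\ell}\,h_{g,1^d}/d!$.

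The main obstacle is the genericity package underpinning the last step: one must show that for generic $\bar a,\bar b,\bar x$ the bundle $L$ is non-special, so that $\dim\mathbb P^m=m$ exactly — this is automatic for $g\le3$, since then $\deg(K-L)<0$, but for $g\ge4$ it requires a semicontinuity/deformation argument, where one may invoke the known smoothness of $\W_{g,d,\ell}$ at a generic point. One must further verify that the $m\binom d2$ hyperplanes are in sufficiently general position for all the selected $m$-fold intersections to be transverse and the resulting $\binom d2^{\,m}$ points to be mutually distinct, and that each such $\beta$ gives a $\kappa$ which is birational onto an irreducible curve whose only singularities are the genuine $\ell$-fold node at $p$ and ordinary nodes elsewhere — in particular that forcing nodes over $x_1\DT,x_m$ creates no additional or worse singularities and that distinct triples (cover, divisor $D$, pair-assignment) yield distinct $G$-orbits, ruling out overcounting.
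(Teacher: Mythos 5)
Your counting skeleton coincides with the paper's own proof: fix the cover ($h_{g,1^d}/d!$ choices for generic $\bar a$), choose the $\ell$ preimages of $p$ in the fibres over $b_1 \DT, b_\ell$ ($d^\ell$ choices), and note that the node-detecting condition over each $x_j$ is a union, over pairs in the fibre $\al^{-1}(x_j)$, of conditions linear in $\beta$, so that selecting one pair per $x_j$ (giving $\binom{d}{2}^{m}$ selections, $m = d+\ell-g-2$) should cut the $m$-dimensional projective space of admissible $\beta$'s down to a single point, i.e.\ a single $G$-orbit. However, the two facts you relegate to the final ``genericity package'' are not routine verifications: they are precisely where the paper does its mathematical work, and your proposed substitutes would not close them. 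The first is non-speciality: as you yourself observe, $h^1(L)=0$ is not automatic because the pole divisor contains $D_\al$ and is therefore \emph{not} generic. This is exactly the paper's Proposition~\ref{Pr:NonSp}, proved by induction on added points, using that a generic $d$-gonal curve is not $(d-1)$-gonal (so $h^0(D_\al)=2$) and showing that a failure of the induction step would force special divisors of large degree to fill an open subset of a symmetric power, which is impossible. Your suggested alternative --- semicontinuity plus ``known smoothness of $\W_{g,d,\ell}$ at a generic point'' --- is close to circular: the statement needed concerns \emph{every} cover $(\C,\al)$ with generic branch divisor $\bar a$ (all $h_{g,1^d}/d!$ of them enter the count), and transferring information from the generic Severi curve to these covers presupposes that the correspondence between Severi curves and covers is dominant with fibres of the expected dimension, which is essentially what is being proved.

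The second gap is transversality. The hyperplanes $H_{j,\{a,b\}}$ are not free parameters that can be put ``in sufficiently general position'' by moving $\bar x$: they are canonically attached to $(\C,\al,D)$, so the claim that any selection of $m$ of them meets transversally in one point needs a geometric argument, not a Bertini-type appeal. The paper supplies it in the Lemma inside the proof of Theorem~\ref{Th:Bend}: if the functional $\rho_k(\beta)=\beta(u_k)-\beta(v_k)$ were a linear combination of the previous ones for $x_k$ ranging over an open set $\Omega\subset\CP^1$, then every $\beta$ satisfying the previous conditions would take equal values on the two chosen points of every fibre over $\Omega$; hence $\beta$ would factor through $\al$ on $\al^{-1}(\Omega)$, so every line $x\in\Omega$ would meet the image curve in a single point --- impossible when $d>1$. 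Without this argument (or an equivalent one), and without the verification you flag but do not carry out --- that distinct pair-selections give distinct orbits and that the resulting curves have no worse singularities than prescribed --- your product formula remains a plausible count rather than a proof.
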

 
 \begin{theorem}\label{Th:Semi}
Let $(g,d,\ell)$ be a semi-bendable triple. Then the Hurwitz--Severi number 
$\h_{g,d,\ell}$ is  equal to $d^{d+2\ell-g-2} 
\binom{2g-d-\ell-1}{g-3} h_{g,1^d}/d!$.
 \end{theorem}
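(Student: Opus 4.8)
The plan is to count $G$-orbits of curves whose branching data match the prescribed generic divisors, and to organize this count by first recording the combinatorial/analytic data of the projection $\alpha_\C:\C\to\CP^1$ and then accounting for the geometric freedom in embedding that data back into $\CP^2$. Recall that in the semi-bendable case $d+\ell<g+2\le d+2\ell$, and the Hurwitz--Severi number counts orbits $\OOO$ with $\RemT(\OOO)=\bar a$ (all $2d+2g-2$ remote tangents prescribed) and $\LocT(\OOO)\ge\bar b$ (at least $m=d+2\ell-g-2$ prescribed local tangents). Since the $2d+2g-2$ remote tangents coincide with the critical values of $\alpha_\C$, and the $\bar a$ are generic (hence simple critical values), the starting point is the ordinary Hurwitz count: the number of pairs $(\C,\alpha)$ of genus $g$ and degree $d$ with these simple critical values is $h_{g,1^d}/d!$, as stated just before the theorem.

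First I would fix one such pair $(\C,\alpha)$ and ask in how many ways it arises from a curve $C\in\W_{g,d,\ell}$ with the prescribed incidence data, modulo the $G$-action. The key observation is that reconstructing $C$ from $\alpha$ amounts to choosing an embedding of $\C$ into $\CP^2$ whose projection from $p$ is $\alpha$; concretely, one must supply the ``vertical'' coordinate, i.e.\ a second meromorphic function (or a section of the relevant line bundle) compatible with $\alpha$. The local tangents at $p$ correspond to the $\ell$ points of $\C$ lying over $p$ (the preimages $\alpha^{-1}$ of the directions through $p$ that are actual branches), and prescribing a local tangent forces the corresponding branch to lie along a prescribed line through $p$. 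The factor $d^{d+2\ell-g-2}$ should emerge as the number of ways to realize the $m=d+2\ell-g-2$ prescribed local-tangent conditions: each such condition picks out one of the $d$ points in a generic fiber $\alpha^{-1}(b_j)$ as the location of a branch through $p$, giving $d$ choices per prescribed line and hence $d^m$ total. The remaining binomial $\binom{2g-d-\ell-1}{g-3}$ should count the discrete choices in distributing the \emph{non-prescribed} local tangents (the ones beyond $\bar b$) together with the residual moduli of the embedding once the $G=3$-dimensional symmetry is quotiented out; this is where a careful dimension bookkeeping, matching $\dim\tW_{g,d,\ell}=3d+2\ell+g-4$ against the branching data, pins down the count.

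The technical heart is therefore the fiber analysis of $\Branch_{g,d,\ell}$ over a generic point of the image, which I would carry out by studying the linear system on $\C$ that governs maps $\C\to\CP^2$ projecting to a fixed $\alpha$. Writing $\alpha=[\,f_0:f_1\,]$, one seeks a third function $f_2$ (up to the affine freedom $f_2\mapsto \lambda f_2+\mu_0 f_0+\mu_1 f_1$, which is exactly the Lie algebra of $G$) such that the resulting curve has genus $g$, degree $d+\ell$, an $\ell$-fold node at $p$, and only nodes elsewhere. The $\ell$-fold node condition translates into $f_2$ having the correct behavior at the $\ell$ chosen preimages of $p$, and I would count the dimension of the space of admissible $f_2$ via Riemann--Roch, identifying when the prescribed local-tangent divisor $\bar b$ cuts out the finite set of discrete solutions enumerated by the binomial coefficient. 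The main obstacle I anticipate is precisely establishing that this enumeration is \emph{transversal} and \emph{complete} — that the generic prescribed data $(\bar a,\bar b)$ is hit with the claimed multiplicity and that no solutions are lost to degeneracies (extra nodes colliding, branches becoming tangent, or the embedding degenerating so that $p$ lies on the curve with the wrong multiplicity). Handling this genericity rigorously, and verifying that the binomial $\binom{2g-d-\ell-1}{g-3}$ correctly counts the combinatorial configurations of the unprescribed local tangents against the Riemann--Roch dimension, is where the real work lies; the Hurwitz factor $h_{g,1^d}/d!$ and the power $d^{d+2\ell-g-2}$ are comparatively routine once the fiber structure is understood.
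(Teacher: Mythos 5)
Your outline correctly reproduces the two routine factors, and in the same way as the paper does: the Hurwitz count $h_{g,1^d}/d!$ of pairs $(\C,\al)$ with critical-value divisor $\bar a$, and the factor $d^{d+2\ell-g-2}$ obtained by lifting each prescribed local tangent $b_j$ to one of the $d$ points of $\al^{-1}(b_j)$. However, the heart of the theorem --- the factor $\binom{2g-d-\ell-1}{g-3}$ --- is exactly what your proposal leaves unproven, and the tools you name will not produce it. After fixing $(\C,\al)$ and the divisor $D_0$ of the lifted prescribed tangent points, a curve with the required incidence data corresponds to a choice of an effective divisor $D$ of degree $g+2-d-\ell$ (the positions on $\C$ of the \emph{unprescribed} local tangents) together with a function $\beta \in \OO(D_\al+D_0+D)$ outside the span of $1$ and $\al$. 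Since $\deg(D_\al+D_0+D)=d+\ell\le g+1$ in the semi-bendable range, Riemann--Roch gives only $h^0\ge 2$ (from $1,\al$), whereas one needs $h^0\ge 3$; this is a Brill--Noether-type \emph{degeneracy} condition on $D$, not something ``dimension bookkeeping'' can quantify. Your proposal even slightly misdescribes what the binomial counts: there are no ``residual moduli of the embedding'' to distribute, because (as the paper shows) $h^0(D_\al+D_0+D)=3$ exactly for every admissible $D$, so each $D$ contributes precisely one $G$-orbit; the binomial counts the divisors $D$ alone.

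The paper supplies the two missing ingredients in Proposition~\ref{Pp:Intersect}. First, a finiteness argument: using the known dimension $\dim\tW_{g,d,\ell}=3d+2\ell+g-4$ of the Severi variety, the set $\cD$ of admissible $D$ is shown to be finite with $h^0(D_\al+D_0+D)=3$ for each $D\in\cD$ (otherwise the orbit space of pairs $(D,\beta)$ would force $\dim\tW_{g,d,\ell}$ to exceed its actual value). Second, an intersection-theoretic computation: $\#\cD$ is computed as a degeneracy locus via the Porteous (Kempf--Laksov) determinantal formula, applied to a bundle of principal parts on $\C^{g+2-d-\ell}$ pulled back from the cotangent bundle of a symmetric power of $\C$, whose Chern classes $c_k(E)=\binom{g-3+m+k}{k}X^k$ come from Ohmoto's formula; a generating-function manipulation of the Porteous determinant then yields $\binom{g-3+m}{g-3}$ with $m=g+2-d-\ell$, i.e.\ $\binom{2g-d-\ell-1}{g-3}$. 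Without the first step you cannot assert that the count is of isolated points each contributing one orbit, and without the second you have no mechanism that produces the binomial coefficient at all; so as written, your proposal establishes only the two easy factors and acknowledges, rather than closes, the gap where the theorem actually lives.
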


 \begin{Example}
 \begin{enumerate}

 \item Projection of a smooth cubics from a point not lying on
   it. Here the triple $(g,d,\ell) = (1,3,0)$ is bendable. The
   ordinary Hurwitz number $h_{1,1^3} = 240$ by \cite{KaLa}, so the
   Hurwitz--Severi number $\h_{1,3,0} = 40$; this result was earlier
   obtained in \cite{OnSh}.

 \item Projection of a smooth cubics from a point lying on it. The
   triple $(g,d,\ell) = (1,2,1)$ is bendable. The ordinary Hurwitz
   number $h_{1,1^2} = 1$ by \cite{KaLa}, so the Hurwitz--Severi number
   $\h_{1,3,0} = 1$; this can be checked by a direct computation.

\item Projection of a nodal cubics from an outside point corresponds
  to a bendable triple $(g,d,\ell)=(0,3,0)$. Here $h_{0,1^3}=24$ by
  \cite{KaLa}, implying $\h_{0,3,0}=12$. This answer can be checked
  directly using a computer algebra system.
     
 \item Projection of a nodal cubics from its smooth point corresponds
   to $(g,d, \ell)=(0,2,1)$. The ordinary Hurwitz number is
   $h_{0,1^2}=1$, so the Hurwitz--Severi number is $\h_{0,2,1}=1$, which
   is easily checked by hand.

\item Projection of a smooth quartics from its point corresponds to
  $(g,d,\ell)=(3,3,1)$, a semi-bendable triple. The ordinary
  Hurwitz number computed using the formulas of \cite{KaLa} is
  $h_{3,1^3} = 19680$, so the Hurwitz--Severi number is
  $\h_{g,d,\ell}=3280$.

\item Projection of a smooth quartics from an outside point corresponds to 
  $(g,d,\ell) = (3,4,0)$. This is an unbendable triple not covered by
  Theorems \ref{Th:Bend} and \ref{Th:Semi}. This case was investigated
  by R.~Vakil in \cite{Va} using different technique.

\end{enumerate}
\end{Example}

Theorems~\ref{Th:Bend} and \ref{Th:Semi} give a complete description of 
Hurwitz--Severi numbers in the bendable and semi-bendable cases. Unlike 
them, the unbendable case seems to require completely new ideas. The only 
result in the unbendable case known to the authors at the time of writing 
is \cite{Va}. 

\subsection*{Acknowledgements} The authors are grateful to M.~Feigin,
A.~Gorinov, M.~Kazarian, S.~Lando, S.~Lvovski, M.~Shapiro, and
I.~Tyomkin for discussions. The first-named author wishes to thank the
Stockholm University for its warm hospitality. The research of the
first named author was supported by the Russian Academic Excellence
Project `5-100' and the grant No.~15-01-0031 ``Hurwitz numbers and
graph isomorphism'' of the Scientific Fund of the Higher School of
Economics. The second named author wants to thank the Department of
Mathematics of Higher School of Economics in Moscow for the
hospitality and financial support of his visit in April 2014, when
this project was initiated. The current project is a continuation of
the previous research, see \cite{BuLv} and \cite{OnSh}, which was the
outcome of the previous visit by the second named author to the HSE in
2013.

\section{Proofs} 

The symmetric power $\C^{(k)}$ of the curve $\C$ is the set of effective 
divisors $D$ of degree $k$ on $\C$. Set $D = x_1 c_1 \DT+ x_m c_m \in 
\C^{(k)}$ (with $c_1 \DT, c_m \in \C$, $x_1 \DT, x_m \in \ZZ_{>0}$, and 
$x_1 \DT+ x_m = k$). Introduce a complex coordinate $z_i$ with $z_i(c_i) = 
0$ in an open set $U_i \subset \C$, $c_i \in U_i$, and set $\tilde D = p_1 
\DT+ p_k \in U$, where $U$ is the image of $U_1 \DT\times U_m$ under the 
standard projection $\C^k \to \C^{(k)}$. Without loss of generality, it 
means that $p_1 \DT, p_{k_1} \in U_1$, $p_{k_1+1} \DT, p_{k_2} \in U_2$, 
\dots, $p_{k_{m-1}+1} \DT, p_k \in U_m$ for some $1 \le k_1 \DT\le k_{m-1} 
\le k$. For every $i = 1 \DT, s,$ consider a principal part $F_i$ of a 
meromorphic function $f_i: U_i \to \CP^1$ having at $p_i$ a pole of the 
degree not exceeding the multiplicity of $p_i$ in the divisor $\tilde D$ 
and having no other poles in $U_i$; let $F = (F_1 \DT, F_s)$.  
 For $c \in \C,$ using the above local coordinates, one has 
 \begin{multline*}
F(c) = \frac{a_1 + a_2 z_1(c) \DT+ a_{k_1} 
z_1(c)^{k_1-1}}{(z_1(c)-z_1(p_1)) \dots (z_1(c)-z_1(p_{k_1}))} + \dots\\
+\frac{a_{k_{m-1}+1} + a_{k_{m-1}+2}z_m(c) \DT+ a_k 
z_m(c)^{k-k_{m-1}-1}}{(z_m(c)-z_m(p_{k_{m-1}+1})) \dots (z_m(c)-z_m(p_k))}.
 \end{multline*}
So, the vectors $F$ of principal parts form a rank $k$ vector bundle on 
$\C^{(k)}$; the coefficients $a_1 \DT, a_k$ form its trivialisation over 
the set $U$. An immediate comparison of the transition maps shows that this 
bundle is isomorphic to the tangent bundle $T\C^{(k)}$.

Given a $1$-form $\nu$ holomorphic in $U_i$, we define a linear functional 
$\nu_z$ on the space of principal parts by the formula
 \begin{equation*}
\nu_z(F_i) = \Res_z F_i\nu.
 \end{equation*}
For a divisor $D = x_1 c_1 \DT+ x_m c_m \in \C^{(k)},$ define $\nu_D \bydef 
\sum_{i=1}^m \nu_{c_i}$. It follows from the above reasoning that $\nu_D$ 
is a section of the complex cotangent bundle $T^*\C^{(k)}$; cf.\ the fiber 
bundle of principal parts introduced in \cite{ELSV}.

There exists a natural map $\Phi: \OO(D) \to T_D\C^{(k)}$ sending a 
memomorphic function $f \in \OO(D)$ to the $m$-tuple $F = (F_1 \DT, F_m)$ of 
its principal parts at the points $c_1 \DT, c_m$. By the Riemann--Roch 
theorem, $F = \Phi(f)$ for some $f$ if and only if $\nu_D(F) = 0$ for every  
holomorphic $1$-form $\nu$ on $\C$. Fixing a basis $\nu_1 \DT, \nu_g$ of 
holomorphic $1$-forms on $\C$, we can calculate the dimension $h^0(D) = 
\dim \OO(D)$ as $k - \dim \langle (\nu_1)_D \DT, (\nu_g)_D\rangle$.

To prove our main results, we need the following technical statement which 
is apparently well-known to the specialists, but we could not find it 
explicitly in the literature:

  \begin{proposition}\label{Pr:NonSp}
Take a pair $(\C,\al)$, where $\C$ is a smooth curve of genus $g$
and $\al: \C \to \CP^1$ is a meromorphic function of degree $d$,
and suppose that $(\C,\al)$ is generic among such pairs. Set $D_\al
:= z_1 \DT+ z_d$, where all  $z_i$ are pairwise distinct. If
$m \ge g+2 \ge d$ and $z_{d+1} \DT, z_m$ are generic pairwise distinct
points, then the divisor $z_1 \DT+ z_m$ is non-special.
  \end{proposition}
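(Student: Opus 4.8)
The plan is to translate non-speciality into a statement about holomorphic $1$-forms and then to remove the generic points one at a time. Recall from the discussion preceding the statement that, for a reduced divisor $D = z_1 \DT+ z_m$ with pairwise distinct points, $D$ is non-special precisely when the $g$ functionals $(\nu_1)_D \DT, (\nu_g)_D$ are linearly independent. Indeed, a linear dependence $\sum_j c_j (\nu_j)_D = 0$ is the same as the vanishing of the form $\sum_j c_j \nu_j$ at every $z_i$, so independence of these functionals is equivalent to $h^0(K-D) = 0$, i.e.\ to the assertion that no nonzero holomorphic $1$-form on $\C$ vanishes at all of $z_1 \DT, z_m$. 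This is the form of the criterion I will verify.

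First I would isolate the fibre. Any $1$-form vanishing at all of $z_1 \DT, z_m$ vanishes in particular on the fibre $D_\al = z_1 \DT+ z_d$, hence lies in the space $V \bydef H^0(\C, \OO(K - D_\al))$, whose dimension is $h^1(D_\al)$. Within $V$ the conditions of vanishing at the remaining generic points $z_{d+1} \DT, z_m$ cut out a subspace; since $V$ is the space of sections of the complete linear system $|K - D_\al|$ on a curve, its base locus is finite, so each generic point imposes an independent condition as long as the current space is nonzero. As $z_{d+1} \DT, z_m$ are generic, the subspace of $V$ annihilated by all of them has dimension $\max(0, h^1(D_\al) - (m-d))$. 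Thus $z_1 \DT+ z_m$ is non-special as soon as $h^1(D_\al) \le m-d$.

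It therefore remains to bound $h^1(D_\al)$, and here the genericity of the pair $(\C,\al)$ enters. I claim that for a generic pair one has $h^1(D_\al) = \max(0, g+1-d)$, equivalently $h^0(D_\al) = \max(2, d-g+1)$; since $D_\al$ moves in the pencil defined by $\al$ we have $h^0(D_\al) \ge 2$, so Riemann--Roch already forces $h^1(D_\al) \ge \max(0, g+1-d)$, and the claim is that this lower bound is attained. Granting it, the hypotheses finish the proof immediately: if $d \ge g+1$ then $h^1(D_\al) = 0 \le m-d$ because $m \ge g+2 \ge d$, while if $d \le g+1$ then $h^1(D_\al) = g+1-d \le m-d$ because $m \ge g+2 > g+1$. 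In either case $h^1(D_\al) \le m-d$, and the reduction of the previous paragraph yields non-speciality of $z_1 \DT+ z_m$.

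The main obstacle is the claim of the previous paragraph, i.e.\ that the fibre of a generic degree-$d$ cover is as non-special as a member of a pencil can be. Since $h^1(D_\al)$ is upper semicontinuous on the (irreducible) Hurwitz space of simply branched degree-$d$, genus-$g$ covers, it suffices to exhibit a single pair attaining the extremal value, whence the generic value equals it. For $d \ge g+1$ one may take $\C$ general and $D_\al$ a general effective divisor of degree $d$, which is non-special with base-point-free system $|D_\al|$ of dimension $d-g \ge 1$; choosing a pencil inside $|D_\al|$ produces $\al$, and all its fibres lie in $|D_\al|$ and hence inherit $h^1 = 0$. For $d \le g$ one instead needs a curve carrying a \emph{complete} $g^1_d$, i.e.\ with $h^0(D_\al) = 2$; this is exactly where Brill--Noether theory is invoked, through the existence of $d$-gonal curves whose minimal pencil is complete and base-point-free, and it is the only genuinely nontrivial input. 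Verifying that the covers so constructed can be taken simply branched, hence lie in the same irreducible Hurwitz space whose generic member we study, is routine by a small perturbation of the chosen pencil.
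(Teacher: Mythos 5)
Your proposal is correct in outline and takes a genuinely different route from the paper's proof. Both arguments share the same skeleton: first control the speciality of the fibre divisor $D_\al$, then show that the generic points $z_{d+1}, \dots, z_m$ impose independent conditions on the holomorphic $1$-forms vanishing on $D_\al$. For the second step the paper argues by induction and contradiction: if a generic new point failed to cut the space down, there would be a whole open set $\Omega \subset \C$ of such points, hence special divisors of arbitrarily large degree filling an open subset of $\C^{(q)}$, which is impossible since special divisors of large degree are nowhere dense. Your observation that a nonzero space of sections of a line bundle on a curve has finite base locus reaches the same conclusion more directly. For the first step the routes really diverge: the paper asserts $h^0(D_\al)=2$ for the generic pair, citing Farkas (a generic $d$-gonal curve is not $(d-1)$-gonal), while you prove the sharp statement $h^1(D_\al)=\max(0,g+1-d)$ by upper semicontinuity over the irreducible Hurwitz space together with the exhibition of a single extremal cover. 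Your formulation buys real robustness at the top of the allowed range: for $d=g+2$ the paper's claim $h^0(D_\al)=2$ contradicts Riemann--Roch (which forces $h^0\ge 3$ there), and the non-gonality statement it rests on is false as soon as $d-1$ reaches the generic gonality $\lfloor (g+3)/2\rfloor$; your case split $d\ge g+1$ versus $d\le g+1$ handles these degrees correctly.

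Two steps you call routine do need repair, though both are repairable by standard facts. First, when $d\le g+1$ the pencil you construct is \emph{complete}, so on a fixed curve there is nothing to perturb it into: ``a small perturbation of the chosen pencil'' cannot produce simple branching in exactly the cases that matter most. What you actually need is that the exhibited cover lies in the closure of the (unique, by Clebsch--Hurwitz) irreducible Hurwitz space of simply branched covers, i.e., that every finite map between smooth curves deforms to a simply branched one. This is true --- the normal sheaf of such a map is a torsion sheaf, so deformations of the pair $(\C,\al)$ are unobstructed and realize all local smoothings of the ramification points --- and upper semicontinuity then propagates the extremal value of $h^1$ to nearby simply branched covers, since that value is also a universal lower bound; but this is deformation theory of the cover, not a perturbation of the pencil on a fixed curve. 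Second, for $\lfloor(g+3)/2\rfloor < d \le g$ there exist no curves of gonality exactly $d$ (every genus-$g$ curve has gonality at most $\lfloor(g+3)/2\rfloor$), so ``$d$-gonal curves whose minimal pencil is complete and base-point-free'' cannot supply the example in that range; you need instead a generic, necessarily non-minimal, pencil on a Brill--Noether general curve, which is complete and base-point-free for $d\le g+1$ by a dimension count comparing $G^1_d$ with $G^2_d$ and with pencils having base points. With these two repairs your argument is complete; it is worth noting that the paper's own first step suffers from the same imprecision in this middle range, so your route, once patched, actually covers the statement more fully than the published proof.
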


Generic divisors are never special, but $z_1 \DT+ z_m$ may be non-generic 
because $z_1 \DT+ z_m \ge D_\al$ for some $\al$ of degree $d$.

 \begin{proof}
Let $\nu_1 \DT, \nu_g$ be a basis of holomorphic $1$-forms on $\C$.
Since $z_1 \DT+ z_d = D_\al$, one has $h^0(z_1 \DT+ z_d) \ge 2$. If
there exists $\psi \in \OO(D_\al)$ not proportional to $\al$,
then there exists their non-constant linear combination with no pole
at $z_d$. Since a generic $d$-gonal curve $\C$ is not $(d-1)$-gonal
\cite{Farkas}, this is impossible, and therefore $h^0(z_1 \DT+ z_d) =
2$.

We now prove by induction that if $d+s \le g+1$ and the points $z_{d+1}
\DT, z_{d+s}$ are in general position, then
 \begin{equation*}
\dim\langle (\nu_1)_{z_1 \DT+ z_{d+s}} \DT, (\nu_g)_{z_1 \DT+ 
z_{d+s}}\rangle = d+s-1.
 \end{equation*}  
Assume that starting with some $s$ the statement fails. It means that
for $z_{d+1} \DT, z_{d+s-1}$ in general position,
 \begin{equation*}
\dim\langle (\nu_1)_{z_1 \DT+ z_{d+s-1}} \DT, (\nu_g)_{z_1 \DT+ 
z_{d+s-1}}\rangle = d+s-2,
 \end{equation*}  
but there exists a non-empty open set $\Omega \subset \C$ such that if
$z_{d+s} \in \Omega$, then
 \begin{equation*}
\dim\langle (\nu_1)_{z_1 \DT+ z_{d+s}} \DT, (\nu_g)_{z_1 \DT+ 
z_{d+s}}\rangle = d+s-2
 \end{equation*}  
as well. In other words,  vector $\vec\nu_{z_{d+s}} \bydef
((\nu_1)_{z_{d+s}} \DT, (\nu_g)_{z_{d+s}})$ is a linear combination of
$\vec\nu_{z_i}$, $i = 1 \DT, d+s-1$.

For an arbitrary positive integer $q$, consider  $q$ points $z_{d+s} 
\DT, z_{d+s+q-1} \in \Omega$. Then for $j = d+s \DT, d+s+q-1$, every 
$\vec\nu_{z_j}$ is a linear combination of $\vec\nu_{z_i}$, $i = 1 \DT, 
d+s-1$, and therefore
 \begin{equation*}
\dim\langle (\nu_1)_{z_1 \DT+ z_{d+s+q-1}} \DT, (\nu_g)_{z_1 \DT+ 
z_{d+s+q-1}}\rangle = d+s-2
 \end{equation*}  
for any $q$. Hence the divisor $z_{d+s} \DT+ z_{d+s+q-1}$ is special for 
any collection $z_{d+s} \DT, z_{d+s+q-1} \in \Omega$, and therefore the set 
of special divisors of any degree $q > g$ on $\C$ contains an open subset 
$\Omega^{(q)} \subset \C^{(q)}$. The latter claim is false since the set of 
special divisors of any sufficiently large degree is nowhere dense.
 \end{proof}

 \begin{proof}[Proof of Theorem~\ref{Th:Bend}]
Take $p = [0{:}1{:}0]$ and suppose without loss of generality that a
curve $C \in \W_{g,d,\ell}$ does not contain the point
$[1{:}0{:}0]$. Then the normalisation map $\kappa: \C \to C$ is given
by
 \begin{equation*}
\kappa(z) = [\beta(z){:}1{:}\al(z)],
 \end{equation*}
where $\al,\beta: \C \to \CP^1$ are meromorphic functions of
degrees $d$ and $d+\ell$, respectively, such that $D_\beta \ge
D_\al$, i.e. $D_\beta - D_\al$ is an effective divisor on
$\C$.

Take a generic divisor $\bar a = a_1 \DT+ a_{2d+2g-2}$ on $\CP^1$. As it 
was noted above, there exist $h_{g,1^d}/d!$ pairs $(\C,\al)$ such that 
$\bar a$ is the divisor of the critical values of $\al$. For any $\beta$, 
one can take $C \bydef \kappa(\C)$, with the map $\kappa: \C \to \CP^2$ as 
above. Then $\RemT(C) = \bar a$ regardless of the choice of $\beta$. Set 
$D_\al := z_1 \DT+ z_d$ and notice that in general position the points $z_1 
\DT, z_d \in \C$ are pairwise distinct (i.e.\ $\al$ has only simple poles).

Now take a generic divisor $\bar b = b_1 \DT+ b_\ell$ on $\CP^1$ and 
choose points $z_{d+1} \DT, z_{d+\ell} \in \C$ such that $\bar b = 
\al(z_{d+1}) \DT+ \al(z_{d+\ell})$. Since the degree of $\al$ is 
$d$, there are $d^\ell$ ways to do this; the points $z_{d+1} \DT, 
z_{d+\ell}$ are pairwise distinct if $\bar b$ is in general position. This 
guarantees the equality $\LocT(C) = \bar b$ for the curve $C = \kappa(\C)$, 
provided that $D_\beta = z_1 \DT+ z_{d+\ell}$.

Assume now that $\bar x = x_1 \DT+ x_{d+\ell-g-2}$ is a generic
divisor on $\CP^1$. For each $i$, take a pair of points $u_i
\ne v_i \in \C$ such that $\al(u_i) = \al(v_i) = x_i$; there are
$\binom{d}{2}^{d+\ell-g-2}$ ways to do this. For $i = 1, \dots,
d+\ell-g-2$, define the functionals $\rho_i: \OO(z_1 \DT+ z_{d+\ell})
\to \Complex$ by
 \begin{equation*}
\rho_i(\beta) \bydef \beta(u_i)-\beta(v_i).
 \end{equation*}
Apparently, $\rho_i(\beta) = 0$ if and only if the line $x_i$ is
node-detecting for the curve $C = \kappa(\C)$.

 \begin{lemma}
For a generic choice of $\al$ and $z_{d+1}, \dots, z_{d+\ell},$ the
functionals $\rho_i$, $i = 1 \DT, d+\ell-g-2$ are linearly
independent.
 \end{lemma}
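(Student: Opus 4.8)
The plan is to reformulate the claim as a statement about the common kernel of the $\rho_i$ and to prove their independence by induction on the number of functionals imposed. First I would record the ambient dimension: since $D_\beta = z_1 \DT+ z_{d+\ell}$ has degree $d+\ell$, the Riemann--Roch theorem gives $\dim \OO(D_\beta) \ge d+\ell-g+1$. Moreover, the common kernel of all the $\rho_i$ automatically contains the two-dimensional subspace $\langle 1,\al\rangle$: indeed $\rho_i(1)=0$, and since $\al(u_i)=\al(v_i)=x_i$ also $\rho_i(\al)=x_i-x_i=0$ (note $\al\in\OO(D_\beta)$ because $D_\al\le D_\beta$). Writing $V_{k-1}\bydef\bigcap_{i<k}\ker\rho_i$, linear independence of $\rho_1 \DT, \rho_m$ (with $m=d+\ell-g-2$) is equivalent to $\dim V_k=\dim V_{k-1}-1$ at each step $k=1 \DT, m$, and I would establish this inductively.

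The key technical ingredient I would isolate first is the following: a function $\beta\in\OO(D_\beta)$ that is constant on the generic fibre of $\al$ must lie in $\langle 1,\al\rangle$. Such a $\beta$ factors as $\beta=\phi\circ\al$ for some rational $\phi$ on $\CP^1$, and I would analyse its poles. The poles of $\beta$ are confined to $D_\beta$, whose $\al$-images are $\infty$ (from $D_\al$) together with the generic values $b_j=\al(z_{d+j})$, each of which has $d-1$ further $\al$-preimages lying outside $D_\beta$. A finite pole of $\phi$ would therefore force points of $\beta$'s polar set outside $D_\beta$, a contradiction; hence $\phi$ has no finite pole. Since $\al$ has only simple poles while $D_\beta$ is reduced, the pole of $\phi$ at $\infty$ is at most simple, so $\phi$ is affine and $\beta=c_0+c_1\al$.

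With this in hand the inductive step is immediate at the level of dimensions. For every $k\le m$ the induction hypothesis gives $\dim V_{k-1}\ge (d+\ell-g+1)-(m-1)=4>2=\dim\langle 1,\al\rangle$, so there exists $\beta_0\in V_{k-1}\setminus\langle 1,\al\rangle$. By the ingredient above $\beta_0$ is non-constant on a generic fibre of $\al$, so the meromorphic function $F(u,v)\bydef\beta_0(u)-\beta_0(v)$ does not vanish identically on the correspondence curve $\Gamma\bydef\{(u,v):\al(u)=\al(v),\ u\ne v\}$. This already shows that \emph{some} pair detects the non-constancy; it remains to upgrade this to the prescribed, generically chosen pairs.

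The hard part, and the only place where genuine care is needed, is precisely this passage from ``$F\not\equiv 0$'' to ``$\rho_k|_{V_{k-1}}\ne 0$ for a generic node-detecting line $x_k$ and for the prescribed pair of its preimages''. I would handle it through the irreducibility of $\Gamma$. For a generic pair $(\C,\al)$ the monodromy group of $\al$ is the full symmetric group $S_d$, which is $2$-transitive; hence $\Gamma$ is irreducible and dominates $\CP^1$ with generic fibre consisting of $\binom{d}{2}$ points, each a generic point of $\Gamma$. Consequently $F\not\equiv 0$ forces $F\ne 0$ on a dense open subset of $\Gamma$, so for generic $x_k$ \emph{every} admissible pair $(u_k,v_k)$ over $x_k$ yields $\rho_k(\beta_0)\ne 0$, i.e.\ $\rho_k$ is independent of $\rho_1 \DT, \rho_{k-1}$ on $V_{k-1}$. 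Choosing $x_1 \DT, x_m$ generically in turn (finitely many open conditions, one for each of the finitely many subspaces $V_{k-1}$ produced by the earlier pair-choices) completes the induction and establishes the independence for all $\binom{d}{2}^{m}$ admissible choices of pairs simultaneously.
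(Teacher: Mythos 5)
Your proof is correct, and although it shares the paper's skeleton --- Riemann--Roch gives $h^0(z_1 \DT+ z_{d+\ell}) \ge d+\ell-g+1$, and everything reduces to showing that an element of the common kernel of $\rho_1 \DT, \rho_{k-1}$ cannot be constant on the fibres of $\al$ --- it settles that key step by a genuinely different, and in fact more watertight, argument. The paper argues geometrically and by contradiction: a $\beta$ in the common kernel killed by $\rho_k$ for generic $x_k$ would be a function of $\al$ over an open set $\Omega$, so every line $x\in\Omega$ through $p$ would meet $C=\kappa(\C)$ in a single point, transversally at some $z_*$, impossible for $d>1$. You instead characterize the fibre-constant functions algebraically: writing $\beta=\phi\circ\al$ and confronting the poles of $\phi\circ\al$ with the reduced divisor $z_1\DT+ z_{d+\ell}$ forces $\phi$ to be affine, so the fibre-constant part of $\OO(z_1\DT+ z_{d+\ell})$ is exactly $\langle 1,\al\rangle$; the estimate $\dim V_{k-1}\ge 4>2$ then produces $\beta_0\notin\langle 1,\al\rangle$, and $\rho_k(\beta_0)\ne 0$ for generic $x_k$ follows directly, with no contradiction needed. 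Your route buys two things the paper's sketch glosses over: first, the paper's contradiction is vacuous when its $\beta$ lies in $\langle 1,\al\rangle$ (then $\kappa(\C)$ is a line, which every line through $p$ does meet in one point, transversally), and likewise when $\beta=\phi\circ\al$ with $\deg\phi\ge 2$; your pole analysis together with the dimension count disposes of exactly these degenerate cases. Second, the paper silently promotes ``$\beta(u_k)=\beta(v_k)$ for the chosen pair over generic $x_k$'' to ``$\beta(u)=\beta(v)$ for all pairs $u,v$ over all $x\in\Omega$''; your appeal to $2$-transitivity of the monodromy (automatic here, since a transitive subgroup of $S_d$ generated by transpositions is all of $S_d$) and the resulting irreducibility of the correspondence curve $\Gamma$ is precisely the justification this promotion needs, and it moreover yields independence simultaneously for all $\binom{d}{2}^{d+\ell-g-2}$ admissible pair choices, which is what the proof of Theorem~\ref{Th:Bend} actually uses. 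What the paper's approach buys in exchange is brevity and geometric transparency. The only blemish in your write-up is cosmetic: the generic fibre of your ordered correspondence curve $\Gamma\to\CP^1$ has $d(d-1)$ points rather than $\binom{d}{2}$; irreducibility via $2$-transitivity is unaffected.
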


 \begin{proof}
By the Riemann--Roch theorem, $h^0(z_1 \DT+ z_{d+\ell}) \ge
d+\ell-g+1$. Thus, for any $k \le d+\ell-g-2,$ there exists a function
$\beta \in \OO(z_1 \DT+ z_{d+\ell})$ such that $\rho_1(\beta) \DT=
\rho_{k-1}(\beta) = 0$. If for generic $\al$ and $z_{d+1}, \dots,
z_{d+\ell},$ the functional $\rho_k$ is a linear combination of
$\rho_i$, $1 \le i \le k-1$, then there exists an open subset $\Omega
\subset \C$ with the following property. If $\al(u) = \al(v) = x
\in \Omega,$ then $\beta(u) = \beta(v)$, implying that $\beta(z)$ is a
function of $\al(z)$ for $z \in \al^{-1}(\Omega)$. Therefore,
for any $z \in \al^{-1}(\Omega)$, the line $\al(z) \in p^\perp$
intersects the curve $C$ at exactly one point. Since
$\al^{-1}(\Omega)$ is open, there exists $z_* \in
\al^{-1}(\Omega)$ such that the intersection is transversal. But if
$d > 1$, this is impossible.
 \end{proof}

Now by Proposition \ref{Pr:NonSp}, one has generically
 \begin{equation*}
h^0(z_1 \DT+ z_{d+\ell}) = d+\ell-g+1
 \end{equation*}
implying that the solutions of the equations $\rho_1(\beta) = \dots =
\rho_{d+\ell-g-2}(\beta) = 0$ form a $3$-dimensional space. The group
$G$ acts transitively upon this space, and Theorem \ref{Th:Bend}
follows.
 \end{proof}

The proof of Theorem~\ref{Th:Semi} is based on the following statement. 

 \begin{proposition} \label{Pp:Intersect}
Let $(g,d,\ell)$ be a semi-bendable triple. Then for a generic pair
$(\C,\al)$, where $\C$ is a smooth curve and $\al: \C \to
\CP^1$ is a degree $d$ meromorphic function, and for a generic
divisor $D_0$ of degree $d+2\ell-g-2$ on $\C$, the set $\cD
\bydef \{D \in \C^{(g+2-d-\ell)} \mid h^0(D_\al+D_0+D) \ge 3\}$ is
finite and contains $\binom{2g-d-\ell-1}{g-3}$ elements. Additionally,
for any $D \in \cD,$ one has $h^0(D_\al+D_0+D) = 3$.
 \end{proposition}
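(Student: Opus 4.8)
The plan is to convert the condition $h^0(D_\al + D_0 + D) \ge 3$ into an enumerative statement about special divisors and then evaluate it. First I would apply Riemann--Roch and Serre duality to the degree-$(d+\ell)$ divisor $D_\al + D_0 + D$. Since the triple is semi-bendable we have $d+\ell \le g+1$, and setting $A \bydef K - D_\al - D_0$ (the residual class, of degree $3g-2d-2\ell$) one gets $h^0(D_\al+D_0+D) = (d+\ell-g+1) + h^0(A-D)$. Writing $n \bydef g+2-d-\ell = \dim \C^{(n)}$, the condition $h^0 \ge 3$ becomes exactly $h^0(A-D) \ge n$. The genericity input pins down the ambient numbers: $h^0(D_\al)=2$ because a generic $d$-gonal curve is not $(d-1)$-gonal (as in the proof of Theorem~\ref{Th:Bend}), and adding the generic points of $D_0$ to the special divisor $D_\al$ leaves $h^0$ unchanged (each generic point only lowers $h^1$), so $h^0(D_\al+D_0)=2$ and hence $h^0(A) = h^1(D_\al+D_0) = 2n-1$. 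Thus $\cD$ is precisely the locus of $D \in \C^{(n)}$ failing to impose independent conditions on the complete special series $|A|$, i.e. the set of $n$-secant $(n-2)$-planes to the image curve $\C \hookrightarrow \CP^{2n-2}$.

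Next I would run the enumeration on the symmetric product $\C^{(n)}$. There is a natural evaluation map of bundles $H^0(A)\otimes \OO_{\C^{(n)}} \to \mathcal E_A$, where $\mathcal E_A$ is the rank-$n$ bundle with fibre $H^0(A\otimes \OO_D)$ over $D$; its kernel over $D$ is $H^0(A-D)$, so $\cD$ is the locus where the map drops rank from $n$ to $n-1$. By the Porteous formula the expected codimension is $(h^0(A)-(n-1))\cdot\big(n-(n-1)\big) = n = \dim \C^{(n)}$, so $\cD$ is expected finite, with length the associated Chern-class integral. Evaluating this integral, equivalently applying the classical secant-plane (de Jonquières) count and using $h^1(A)=2$, i.e. the presence of the gonality pencil $\al$, should produce $\binom{g+n-3}{n} = \binom{2g-d-\ell-1}{g-3}$. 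As a base check, when $n=1$ (so $d+\ell=g+1$ and $h^0(A)=1$) the series $|A|$ is a single divisor $A_0$ of degree $g-2$, and $\cD=\{q \mid q \le A_0\}$ has exactly $g-2 = \binom{g-2}{g-3}$ elements; this is the base of an induction on $n$ reproducing the Pascal recursion $\binom{g+n-3}{n} = \binom{g+n-4}{n} + \binom{g+n-4}{n-1}$, which I expect to be the cleaner route than a direct Chern computation.

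The main obstacle is genericity. The curve $\C$ is not a general curve of genus $g$ but a general $d$-gonal one equipped with a chosen pencil $\al$ and a generic $D_0$, so general-position Brill--Noether transversality cannot be invoked directly. The crux is to show that for a generic triple $(\C,\al,D_0)$ the degeneracy locus $\cD$ is reduced, $0$-dimensional, and free of excess or embedded components, so that the virtual Porteous count equals the actual cardinality, and simultaneously that $h^0(A-D)=n$ (equivalently $h^0(D_\al+D_0+D)=3$) exactly for every $D\in\cD$. I would prove this by an incidence-variety dimension count over the parameter space of such triples, in the spirit of Proposition~\ref{Pr:NonSp}: a positive-dimensional $\cD$ (or a jump of $h^0$ by more than one on an open set) would force either an open family of special divisors of too large a degree on $\C$, or force the putative third function to be a function of $\al$ on an open set, contradicting the gonality of $\C$ exactly as in the linear-independence argument used in the proof of Theorem~\ref{Th:Bend}.
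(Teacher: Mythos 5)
Your first step is correct, and it is in fact the same as the paper's: with $n \bydef g+2-d-\ell$, the paper's condition $\dim \langle (\nu_1)_{D_\al+D_0+D},\dots,(\nu_g)_{D_\al+D_0+D}\rangle \le d+\ell-2$ \emph{is} the Serre-dual statement $h^0(A-D)\ge n$, and your computation $h^0(A)=2n-1$ (via $h^0(D_\al+D_0)=2$) is also the one the paper makes. The first genuine gap is in the enumeration. You never evaluate the degeneracy-class integral; you assert that the classical secant-plane (de Jonqui\`eres) count ``should produce'' $\binom{g+n-3}{n}$, and your only check is $n=1$. But the virtual Porteous class of the corank-one locus of $H^0(A)\otimes\OO_{\C^{(n)}}\to \mathcal E_A$ depends only on $\deg A=g+2n-4$, $g$ and $n$, and it does \emph{not} equal $\binom{g+n-3}{n}$ once $n\ge 2$; $n=1$ is precisely the degenerate case where all candidate formulas coincide, so it discriminates nothing. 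You can see the problem with no Chern classes at all when $n=2$: then $\deg A=g$, $h^0(A)=3$, and your own reformulation identifies $\cD$ with the pairs $z+w$ glued by the map $\phi_{|A|}:\C\to\CP^2$, i.e.\ with the nodes of a plane curve of degree $g$ and geometric genus $g$. If that model is generic (birational, nodal), then $\#\cD=\binom{g-1}{2}-g\neq\binom{g-1}{2}$; if it is not, the classical secant count does not apply at all. Either way, ``apply de Jonqui\`eres'' cannot close the proof: the gonal, special geometry must enter the determinantal computation itself, not merely the transversality discussion. This is exactly where the paper's proof is structured differently: it never forms the secant bundle of $A$. It first uses $h^0(D_\al)=2$ to show the residue vector at $z_d$ is \emph{always} a combination of those at $z_1,\dots,z_{d-1}$, so the defining condition of $\cD$ becomes a corank-one condition on the smaller divisor $D'=D_\al+D_0+D-z_d$; it then freezes the non-moving points $Z$ and applies Porteous on the Cartesian power $\C^{\,g+2-d-\ell}$ to the pulled-back bundle $\iota_Z^*S_{d+\ell-1}^*T^*\C^{(d+\ell-1)}$, whose Chern classes it computes from Ohmoto's formula; only this modified problem yields $\binom{2g-d-\ell-1}{g-3}$. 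Your fallback (an induction realizing the Pascal recursion) is stated with no geometric mechanism whatsoever for the inductive step.

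The second gap is the finiteness and the equality $h^0=3$, which you rightly call the crux but for which your sketch cannot substitute for the paper's argument. The paper's proof here is extrinsic, not an incidence count on $\C$: it maps $(D,\beta)$ to the plane curve $\kappa(\C)$, $\kappa=[\beta{:}1{:}\al]$, observes that $D$ and the $G$-orbit of $\beta$ are recoverable from that curve, and then forces $\delta=0$ by comparing with the known dimension $\dim\tW_{g,d,\ell}=3d+2\ell+g-4$ of the Severi variety (Harris) --- an external input your proposal never invokes. Your intrinsic substitutes do not reach this conclusion: Proposition~\ref{Pr:NonSp} concerns non-speciality ($h^1=0$) of divisors of degree at least $g$, whereas here $D_\al+D_0+D$ has degree $d+\ell\le g+1$ and is automatically special, the relevant jump condition being $h^0\ge 3$; and a positive-dimensional family of $D$'s with $h^0(D_\al+D_0+D)=3$ in no way forces the third function $\beta$ to factor through $\al$ --- each member of such a family can carry an honest $\beta$, so no contradiction with gonality arises. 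That soft general-position reasoning cannot suffice is shown already by $d=2$: for a hyperelliptic pair $(\C,\al)$, any $D$ containing a fibre of $\al$ satisfies $h^0(D_\al+D_0+D)\ge h^0(2D_\al)=3$, so $\cD$ contains a positive-dimensional family (the triple $(g,d,\ell)=(4,2,2)$ is semi-bendable in the paper's sense). Finiteness is thus a delicate fact that hinges on data (gonality at least $3$, equivalently non-emptiness of $\W_{g,d,\ell}$, plus the Severi dimension bound) which appear nowhere in your plan; without them, both of your main steps remain open.
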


 \begin{proof}
Choose $D \in \cD$ and $\beta \in \OO(D_\al+D_0+D)$ and define a 
plane curve $C_{D,\beta} \bydef \kappa(\C)$, where $\kappa(z) \bydef 
[\beta(z){:}1{:}\al(z)]$. The group $G$ acts on $\OO(D_\al+D_0+D)$ by  
$\beta \mapsto a\beta + b\al + c$, where $a,b,c \in \Complex, a \ne 0$. 
Thus $G$ acts on the set of all $C_{D,\beta}$ with a fixed $D$.

Prove first that $\cD$ is finite and that $h^0(D_\al+D_0+D) = 3$ for any $D 
\in \cD$. Consider the orbit space $\widetilde{\cD} \bydef \{(D,\beta) \mid 
D \in \cD, \beta \in \OO(D_\al+D_0+D)\}/G$ and let $\delta \bydef \dim 
\widetilde{\cD}$ be its dimension. The pair $(\C,\al)$ is determined, up to 
a finite choice, by the divisor of the critical values of $\al$, which has 
degree $2d+2g-2$; so the set of all such pairs has dimension $2d+2g-2$. The 
dimension of the set of all divisors $D_0$ is equal to $\deg D_0 = 
d+2\ell-g-2$. The choice of $(D,\beta) \in \widetilde{\cD}$ determines a 
curve $C_{D,\beta}$ up to the action of $G$, that is, it determines a point 
in $\tW_{g,d,\ell}$. On the other hand, for a given curve $C_{D,\beta} \in 
\W_{g,d,\ell}$, one can uniquely restore the divisor $D$ on the 
normalisation $\C$ of $C_{D,\beta}$ noticing that its points are the poles 
of $\beta$ or, equivalently, the points of $\C$ sent by the normalisation 
map to the base point $p \in C_{D,\beta}$. So, different choices of $D \in 
\cD$ and different orbits of the $G$-action on $\OO(D_\al+D_0+D)$ for a 
fixed $D$ correspond to different points of $\tW_{g,d,\ell}$. This implies 
the inequality
 \begin{equation*}
\dim \tW_{g,d,\ell} \ge (2d+2g-2) + (d+2\ell-g-2) + \delta.
 \end{equation*}
Since $\dim \tW_{g,d,\ell} = 3d+2\ell+g-4$ (see e.g., \cite{Ha1}), one
gets that $\delta = 0$. Thus, $\cD$ consists of a finite number
of points, and for any such point $D,$ the number of $G$-orbits in
$\OO(D_\al+D_0+D)$ is finite, which means that $h^0(D_\al+D_0+D)
= 3$.

Count now the points $D \in \cD$. Set $D_\al \bydef z_1 \DT+ z_d$, 
$D_0 \bydef z_{d+1} \DT+ z_{2d+2\ell-g-2}$, $D \bydef z_{2d+2\ell-g-1} \DT+ 
z_{d+\ell}$ and denote
 \begin{equation*}
D' \bydef D_\al+D_0+D-z_d = z_1 \DT+ z_{d-1} + z_{d+1} \DT+ z_{d+\ell}.
 \end{equation*}
As was shown above, $h^0(D_\al+D_0+D) \ge 3$ if and only if
 \begin{equation*}
\dim \langle (\nu_1)_{D_\al+D_0+D} \DT,
(\nu_g)_{D_\al+D_0+D}\rangle \le d+\ell-2
 \end{equation*}
or, equivalently, $\dim \langle \vec\nu_{z_1} \DT, 
\vec\nu_{z_{d+\ell}}\rangle \le d+\ell-2$. $\C$ is a generic $d$-gonal 
curve, therefore it is not $(d-1)$-gonal implying that  $h^0(D_\al) = 2$. 
Thus the vector $\vec\nu_{z_d}$ is a linear combination of $\vec\nu_{z_1} 
\DT, \vec\nu_{z_{d-1}}$ (see the proof of Proposition \ref{Pr:NonSp} for 
notation), which means that the last condition is equivalent to
 \begin{equation*}
\dim \langle \vec\nu_{z_1} \DT,
\vec\nu_{z_{d-1}}, \vec\nu_{z_{d+1}} \DT, \vec\nu_{z_{d+\ell}}\rangle
\le d+\ell-2,
 \end{equation*}
i.e. $\dim \langle (\nu_1)_{D'} \DT, (\nu_g)_{D'}\rangle \le
d+\ell-2$.

For any $k$ denote by $S_k: \C^k \to \C^{(k)}$ the natural projection; for 
any vector $X \in \C^k$ denote by $\iota_X: \C^k \to \C^{m+k}$ the natural 
embedding (coordinates of $X$ are written before the coordinates of the 
argument). Take any point $Z = (z_1 \DT, z_{d-1}, z_{d+1} \DT, 
z_{2d+2\ell-g-2})$ such that $S_{2d+2\ell-g-3}(Z) = D_0 + D_1 - z_d$  and 
consider the vector bundle $E = \iota_Z^* S_{d+\ell-1}^* T^* 
\C^{(d+\ell-1)}$ of rank $d+\ell-1$ on $\C^{g+2-d-\ell}$. (In other words, 
$Z$ is an arbitrary ordering of $z_1 \DT, z_{d-1}, z_{d+1}, 
\allowbreak\dots, z_{2d+2\ell-g-2}$.) The Riemann--Roch theorem implies 
that $D \in \cD$ if and only if for any $W \in S_{g+2-d-\ell}^{-1}(D_1)$, 
one has
 \begin{equation*}
\dim \langle \iota_Z^* S_{d+\ell-1}^*(\nu_1)_{D'}(W) \DT, \iota_Z^*
S_{d+\ell}^*(\nu_g)_{D'}\rangle \le d+\ell-2.
 \end{equation*}
Since we have shown that  variety $S_{g+2-d-\ell}^{-1}(\cD)$ is 
$0$-dimensional, its number of points is given by the Porteous formula 
\cite{Porteous}:
 \begin{equation}\label{Eq:Porteous}
\# S_{g+2-d-\ell}^{-1}(\cD) Y = \det \left(\begin{array}{cccll}
c_1(E) & c_2(E) & \dots & c_{g+1-d-\ell}(E) & c_{g+3-d-\ell}(E) \\
1 & c_1(E) & \dots & c_{g-d-\ell}(E) & c_{g+1-d-\ell}(E)\\
0 & 1 & \dots & c_{g-1-d-\ell}(E) & c_{g-d-\ell}(E)\\
\hdotsfor{5}\\
0 & 0 & \dots & \hphantom{c_1(E)}1 & c_1(E)
\end{array}\right),
 \end{equation}
where $Y \in H^{2(g+2-d-\ell)}(\C^{g+2-d-\ell})$ is the generator of the 
top-dimensional cohomology, i.e. it is the Poincar\'e dual of a point. 

Set $m \bydef g+2-d-\ell$ for brevity, and denote by $\cN_{k,m}$ the
lower right $(k \times k)$-minor of \eqref{Eq:Porteous}. In particular,
$\# S_{g+2-d-\ell}^{-1}(\cD) Y = \cN_{m,m}$. Developing the
determinant by its first column, one obtains $\cN_{m,m} =
\sum_{k=1}^m(-1)^{k+1} c_k(E) \cN_{m-k,m}$, implying
 \begin{equation}\label{Eq:NViaC}
\sum_{k=0}^m \cN_{k,m} = \left(\sum_{k=0}^m (-1)^k c_k(E)\right)^{-1}.
 \end{equation}

Denote by $x \in H^2(\C^{(k)})$ the class dual to the fundamental
homological class of the diagonal $\{kz\mid z \in \C\} \subset
\C^{(k)}$.  It follows from the general formula of \cite{Ohmoto} that
$c_k(T^* \C^{(m)}) = \binom{2g-2+k-m}{k} x^k$. One has $S_m^* x = x_1
\DT+ x_m \bydef X$, where $x_i \in H^2(\C^m)$ is the class dual to the
fundamental class of the $i$-th copy of $\C$ in the product $\C
\DT\times \C = \C^k$. Additionally,  one has $\iota_Z^* x_i = x_{i-m} \in
H^2(C^k)$ if $i > m$ and $\iota_Z^* x_i = 0$ if $i \le m$. Therefore
$c_k(E) = \binom{g-3+m+k}{k} X^k$, implying that

 \begin{equation*}
\sum_{k=0}^m (-1)^k \binom{g-3+m+k}{k} X^k = (1+X)^{-(g-3+m)}.
 \end{equation*}
Thus,  \eqref{Eq:NViaC} implies that $\sum_{k=0}^m \cN_{k,m} =
(1+X)^{(g-3+m)}$, giving $\cN_{m,m} = \binom{g-3+m}{g-3} X^m$. Since
$Y = X^m/m!$, then $\# S_m^{-1}(\cD) = m!
\binom{g-3+m}{g-3}$. By dimensional reasons, in generic situation all
the elements $\cD$ are sums of exactly $m$ distinct points, meaning 
that
 \begin{equation*}
\# \cD = \frac{1}{m!} \# S_m^{-1}(\cD) =
\binom{g-3+m}{g-3} = \binom{2g-1-d-\ell}{g-3}.
 \end{equation*}
 \end{proof}

 \begin{proof}[Proof of Theorem~\ref{Th:Semi}]
Similarly to the proof of Theorem \ref{Th:Bend}, for a generic
divisor $\bar a = a_1 \DT+ a_{2d+2g-2}$, there are $h_{g,1^d}/d!$ ways to
choose a  curve $\C$ of genus $g$ and a degree $d$ meromorphic function
$\al: \C \to \CP^1$ such that $\bar a$ is its divisor of critical
values.

Let $D_\al$ be the pole divisor of $\al$; choose
$d+2\ell-g-2$ points $z_{d+1} \DT, z_{2d+2\ell-g-2} \in \C$ such that
$\bar b = \al(z_{d+1}) \DT+ \al(z_{2d+2\ell-g-2})$. For generic
$\bar b$, there are $d^{d+2\ell-g-2}$ ways to do that.

Similar to the proof of Proposition \ref{Pp:Intersect}, denote $D_0
\bydef z_{d+1} \DT+ z_{2d+2\ell-g-2}$ for short, and  denote by
$\cD \subset \C^{(g+2-d-\ell)}$ the set of effective divisors
$D \bydef z_{2d+2\ell-g-1} \DT+ z_{d+\ell}$ of degree $g+2-d-\ell$
such that $h^0(D_\al+D_0+D) \ge 3$. By Proposition
\ref{Pp:Intersect}, the set $\cD$ is finite, and for any $D \in
\cD$, the space $\OO(D_\al+D_0+D)$ has dimension $3$ and
contains exactly one orbit of the group $G$. Therefore the number of
points $C \in \tW_{g,d,\ell}$ with $\RemT(C) = a$ and $\LocT(C) \ge b$
is equal to
 \begin{equation*}
d^{d+2\ell-g-2} h_{g,1^d}/d! \# \cD =
d^{d+2\ell-g-2} \binom{2g-d-\ell-1}{g-3} h_{g,1^d}/d!.
 \end{equation*}
 \end{proof}

\section{Final Remarks}

\def \thesubsubsection {\arabic{subsubsection}}

\subsubsection{} The definition of Hurwitz--Severi numbers given above can 
be easily extended from the class of Severi varieties $W_{g,d,\ell}$ to a 
somewhat broader class $W_{g,d,\ell,\mu}$ which appeared earlier in several 
papers of J.~Harris and Z.~Ran. Namely, one can additionally require that 
curves under consideration have a given set $\mu$ of tangency 
multiplicities to a given line passing through the point $p$. One might 
expect Theorems~\ref{Th:Bend} and \ref{Th:Semi} to have straightforward 
analogs in this more general setup.

\subsubsection{} The problem of calculation of Hurwitz--Severi numbers for 
the simplest unbendable case $g = (d-1)(d-2)/2, \ell = 0$, i.e.\ when a 
smooth plane curve of degree $d$ is projected from a point not lying on it, 
bears a strong resemblance with the problem of calculation of Zeuten 
numbers, see \cite{Ze}. Namely, in a special case, Zeuten's problem asks 
how many smooth plane curves of degree $d$ are tangent to a given set of 
$\frac{d(d+3)}{2}$ lines in general position. The Hurwitz--Severi number 
for the case $g = (d-1)(d-2)/2$ and $\ell = 0$ counts the number of 
$G$-orbits of smooth curves of degree $d$ which are tangent to a given set 
of $\frac{d(d+3)}{2}-3$ generic lines passing through a given point $p$. To 
the best of our knowledge, both problems are unsolved at present and 
apparently are quite difficult.

\subsubsection{} One possible approach to the calculation of Hurwitz 
numbers in the unstable case (such as $\left((d-1)(d-2)/2, d, 0\right)$) 
might be the use of tropical algebraic geometry. For example, in 
\cite{BBM} the authors studied tropical analogs of Zeuten numbers and were 
able to recover some of the classical Zeuten numbers through their tropical 
analogs.

\subsubsection{} It would be interesting to study possible relation of the 
above Hurwitz--Severi numbers to appropriate Gromov-Witten invariants of 
plane curves.

\end{document}